\newtheorem{theorem}{Theorem}[section]
\newtheorem{lemma}[theorem]{Lemma}
\newtheorem{cor}[theorem]{Corollary}
\theoremstyle{definition}
\newtheorem{definition}[theorem]{Definition}
\newtheorem{prop}[theorem]{Proposition}
\theoremstyle{remark}
\numberwithin{equation}{subsection}
\theoremstyle{plain}
\def \l { \l_{\mathcal{P}}}
\numberwithin{equation}{section}
\begin{document}
\title[ On the almost palindromic width]{On the almost palindromic width of certain free constructions of groups} 
\author{Krishnendu Gongopadhyay}
\address{Department of Mathematical Sciences, Indian Institute of Science Education and Research (IISER) Mohali,
Knowledge City, Sector 81, S.A.S. Nagar, P.O. Manauli 140306, India}
\email{krishnendu@iisermohali.ac.in}
\author{Shrinit Singh}
\address{International Center for Theoretical Sciences, Survey No. 151, Shivakote,
Hesaraghatta Hobli,
Bengaluru - 560089 }
\email{shrinit.singh@icts.res.in, shrinitsingh@gmail.com}

\subjclass[2020]{Primary 20F65, 05E16; Secondary 20E06}
\keywords{almost palindromic width; graph of groups; HNN extension; amalgamated free product}

\date{\today}


\begin{abstract}
We provide a general structural criterion implying that a group has infinite $m$-almost palindromic width. In particular, we prove that both HNN extensions and free products exhibit infinite $m$-almost palindromic width, with the unique exception of the infinite dihedral group among free products. This framework extends and strengthens the results of \cite{MS} and \cite{GK}.
\end{abstract}
\maketitle
\section{Introduction}
Let $G=\langle X \rangle$ be a group with generating set $X$.   A reduced word $w$ in the
alphabet $X^{\pm 1}$ is called a {\it palindrome} if $w$ reads the same
left-to-right and right-to-left. Correspondingly, an element $g \in G$ is called a
\emph{palindrome} if there exists a word $w$ in $X^{\pm 1}$ representing $g$ that is a palindrome. 

More generally, an element $g \in G$ is said to be an {\it $m$-almost palindrome} if a representation of $g$ in $X^{\pm 1}$ differs from a palindrome by a change of at most $m$ letters in $X^{\pm 1}$. We denote the set of all $m$-almost palindromes in
$G$ by $\mathcal{P}_m = \mathcal{P}_m(X)$. It is clear that $\mathcal{P}_0(X)$, the set of all palindromes, generates $G$. In fact, we have an increasing sequence of generating sets for $G$ $$\mathcal{P}_0(X) \subseteq \mathcal{P}_1(X) \subseteq \mathcal{P}_2(X) \subseteq \ldots .$$ For a fixed $m$, any element $g \in G$ can be expressed as a product of $m$-almost palindromes
$$
g = p_1 p_2 \ldots p_k,
$$
where each $p_i \in \mathcal{P}_m$. The minimum $k$ of such factors is called the \emph{$m$-almost palindromic length} of $g$, denoted by $l_{\mathcal{P}_m}(g)$.  The \emph{$m$-almost palindromic width} of $G$, with respect to the generating set $X$, is given by
$$
{\rm pw}_m(G,X)= \underset{g \in G}{\sup} \ l_{\mathcal{P}_m}(g).
$$
The study of palindromic width in groups has been an active area of research (see, for instance, \cite{BST,BT}, \cite{BG,BG2,BG3}, \cite{fi}). As a generalization, Bardakov \cite[Problem~19.8]{KN} posed the following question: Does there exist a pair of natural numbers $(c,m)$ such that every element in free group of two generators $\{a,b\}$ can be written as a product of at most $c$ many $m$-almost palindromes in letters $\{a^{\pm 1}, b^{\pm 1} \}$? Staiger \cite{MS} answered this question negatively showing that no such pair exists for any non-abelian free group. This result implies that $m$-almost palindromic width of a non-abelian free group is infinite.

We prove the following theorem that gives a general criterion for infiniteness of $m$- almost palindromic width.  

\begin{theorem}\label{main}
    Let $G = \langle A \rangle$. If $G$ admits an unbounded quasimorphism $f$ that is bounded above on palindromes over $A$, then $\mathrm{pw}_m(G, A)$ is infinite.
\end{theorem}

We further apply this theorem to obtain the infiniteness of $m$-almost palindromic width for certain free constructions of groups. For  $\mathrm{HNN}$ extensions, we prove the following.
\begin{theorem}\label{HNN}
    Let $G$ be a group and $H_1$ and $H_2$ be two proper isomorphic subgroup of $G$ via isomorphism $\phi: H_1 \longrightarrow H_2$. The $\mathrm{HNN}$ extension of $G$ via $\phi$
    $$G_* = \langle G, t ~\vert ~ t^{-1}ht = \phi(h), h \in H_1 \rangle$$
    has infinite $m$--almost width with respect to the generating set $G \cup \{t,t^{-1}\}$ for every $m \geq 0$.
\end{theorem}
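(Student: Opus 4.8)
The plan is to reduce the theorem to the construction of one auxiliary function. For a fixed $m\ge0$ I would look for $\lambda\colon G_*\to\mathbb Z_{\ge0}$ with three features: (i) \emph{quasi-subadditivity}, $\lambda(gh)\le\lambda(g)+\lambda(h)+c$ with $c=c(G_*)$ independent of $g,h$; (ii) \emph{uniform boundedness on $\mathcal P_m$}, say $\lambda(p)\le C(m)$ for every $p\in\mathcal P_m$; and (iii) \emph{unboundedness} on $G_*$. These three properties immediately yield the theorem: if $g=p_1p_2\cdots p_k$ with all $p_i\in\mathcal P_m$ then $\lambda(g)\le kC(m)+(k-1)c$, so $k\ge\lambda(g)/(C(m)+c)$, and feeding in a sequence $w_n$ with $\lambda(w_n)\to\infty$ forces $l_{\mathcal P_m}(w_n)\to\infty$, i.e. $\pw_m(G_*,\,G\cup\{t,t^{-1}\})=\infty$.

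The combinatorial setting is Britton's Lemma: each $g\in G_*$ has a reduced form $g=g_0t^{\epsilon_1}g_1\cdots t^{\epsilon_n}g_n$ containing no pinch $t^{-1}ht$ ($h\in H_1$) or $tht^{-1}$ ($h\in H_2$); the $t$-length $n$ and the sign sequence $(\epsilon_1,\dots,\epsilon_n)$ are invariants of $g$, and when two reduced forms are concatenated the ensuing cancellation stays confined to the interface. This last fact is what makes any sensible invariant extracted from the reduced-form data quasi-subadditive, giving (i). For (iii) and for the test sequence I would use the elementary fact that a group is never the union of two proper subgroups: fixing $a\in G\setminus(H_1\cup H_2)$, one can build reduced words $w_n$ (for instance with an all-positive sign sequence, so that no pinch can ever occur, and with $G$-syllables taken from a small set including $a$ and $1$ in a deliberately lopsided arrangement) whose $t$-length grows linearly in $n$ and whose reduced-form data is as "non-palindromic" as $\lambda$ can detect, so that $\lambda(w_n)\to\infty$.

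Building $\lambda$ and proving property (ii) is the crux, and I expect it to be the principal obstacle. The difficulty — already the essential point at $m=0$, where the construction follows \cite{GK} — is that the reduced form of a palindrome need \emph{not} be a palindrome. A palindromic word has the shape $u\,c\,\bar u$ with $u$ an arbitrary word, $c$ a single letter or empty, and $\bar u$ the letter-reversal of $u$; Britton-reducing $u\,c\,\bar u$ can be forced to break the mirror symmetry (for example $tht^{-1}$ with $h\in H_1\setminus H_2$ is a palindrome, equal to $t\,h\,t^{-1}\,\phi(h)^{-1}\,t^{-1}\,h\,t$, yet its reduced form has the non-palindromic sign sequence $(+,-)$). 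So $\lambda$ must be assembled — from the sign sequence and the $H_1/H_2$-membership type of the successive $G$-syllables, weighted antisymmetrically about the midpoint of the reduced form — in such a way that it stays bounded on genuinely palindromic words, and one must establish the quantitative statement that the reduced form of an arbitrary palindrome differs from an honestly palindromic word by at most a bounded number of local syllable modifications, which then caps $\lambda$ on $\mathcal P_0$. Finally, an $m$-almost palindrome is a palindrome with at most $m$ further letter changes; provided $\lambda$ is engineered so that a single letter change alters $\lambda$ of the represented element by only $O(1)$, this gives $\lambda(p)\le C(0)+O(m)=:C(m)$ for all $p\in\mathcal P_m$. Carrying out this last step — tracking how $m$ letter changes propagate through the Britton reduction, in the spirit of \cite{MS} — is exactly the ingredient beyond the $m=0$ case of \cite{GK}.
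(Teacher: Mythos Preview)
Your proposal is correct and follows essentially the same architecture as the paper: a quasi-subadditive invariant $f$ on $G_*$, a uniform bound on $f$ over $\mathcal{P}_m$, and an explicit sequence on which $f$ is unbounded. Two points are worth flagging where the paper is simpler than you anticipate.

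First, the function $f$ the paper uses (taken from \cite{bardakov1997width} and \cite{GK}) depends \emph{only} on the signature $(\epsilon_1,\dots,\epsilon_n)$ of the reduced form, not on any $H_1/H_2$-membership data of the $G$-syllables. Concretely, one counts maximal runs of $+1$'s and $-1$'s of each length $k$, takes the difference $d_k$, and sums the parities $r_k=d_k\bmod 2$. The bound $f(p)\le 1$ for palindromes $p\in\mathcal{P}_0$ is already established in \cite[Lemma~2.7]{GK}, so the ``antisymmetric weighting about the midpoint'' you describe is unnecessary.

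Second, and more importantly, the passage from $\mathcal{P}_0$ to $\mathcal{P}_m$ requires \emph{no} tracking of how letter changes propagate through Britton reduction. The paper's argument is purely formal: if $\tilde p$ differs from a palindrome $p$ at positions occupied by letters $s_1,\dots,s_r$ ($r\le m$), write $p=w_1s_1w_2\cdots s_rw_{r+1}$ and $\tilde p=w_1\tilde s_1w_2\cdots \tilde s_rw_{r+1}$. Applying the quasi-homomorphism inequality $|f(uv)-f(u)-f(v)|\le 6$ a total of $2r$ times to $p$ bounds $\sum f(w_i)$ in terms of $f(p)\le 1$ and the trivial bounds $f(s_i)\le 1$; applying it $2r$ more times to $\tilde p$ then bounds $f(\tilde p)$ linearly in $m$. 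In other words, your property (i) together with the bound on single letters already delivers the ``single letter change alters $\lambda$ by $O(1)$'' statement you flagged as the principal obstacle --- there is no separate analysis to carry out.
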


For free products of two groups, the following holds. 

\begin{theorem}\label{freeprod}
    Let $G = G_1 * G_2$ be a free product of $G_1$ and $G_2$ such that $|G_1| \geq 3$ and $|G_2 | \geq 2$. Then ${\rm pw}_m (G, G_1 \cup G_2)$ is infinite. 
\end{theorem}

It is evident that $${\rm pw}_0(G,X) \geq {\rm pw}_1(G,X) \geq {\rm pw}_2(G,X) \geq \ldots$$
since $\mathcal{P}_0(X) \subseteq \mathcal{P}_1(X) \subseteq \mathcal{P}_2(X) \subseteq \ldots$. This leads to the following result, a quick corollary to \cite[Proposition~3.13]{GK}.

\begin{cor}\label{fincor}
    Let $G = G_1*_H G_2$ be an amalgamated free product of $G_1$ and $G_2$ over a proper subgroup $H$ such that $|G_1 : H| \leq 2$ and $|G_2 : H| \leq 2$. Then ${\rm pw}_m (G, G_1 \cup G_2)$ is finite. 
\end{cor}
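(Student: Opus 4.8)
The plan is to obtain this as a one-step consequence of the finiteness of the \emph{ordinary} palindromic width in this range of indices, using the monotonicity recorded just above the statement. First, since $\mathcal{P}_0(X) \subseteq \mathcal{P}_1(X) \subseteq \mathcal{P}_2(X) \subseteq \cdots$, every factorization of an element $g \in G$ into $0$-almost palindromes is in particular a factorization into $m$-almost palindromes, so $l_{\mathcal{P}_m}(g) \leq l_{\mathcal{P}_0}(g)$ for all $g$, and therefore
$$
{\rm pw}_m(G, G_1 \cup G_2) \leq {\rm pw}_0(G, G_1 \cup G_2)
$$
for every $m \geq 0$. Hence it suffices to prove that the right-hand side is finite.

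Next, I would invoke \cite[Proposition~3.13]{GK}, which asserts precisely that the ordinary palindromic width of an amalgamated free product $G_1 *_H G_2$ with $|G_1 : H| \leq 2$ and $|G_2 : H| \leq 2$ is finite with respect to the generating set $G_1 \cup G_2$. Combining this with the displayed inequality gives ${\rm pw}_m(G, G_1 \cup G_2) < \infty$, which is the assertion. The only verification required is that the hypotheses of the corollary coincide with those of the cited proposition, which is immediate; note that, $H$ being a proper subgroup of each factor, these hypotheses amount to $|G_1 : H| = |G_2 : H| = 2$, exactly the configuration excluded by \thmref{amal}, so the two results are complementary and together settle all index configurations.

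Since this is a direct combination of an elementary inequality with a quoted result, there is no genuine obstacle here; the mathematical content resides entirely in \cite[Proposition~3.13]{GK}. For orientation, the reason the index-two case behaves so differently is that each factor then contributes a single nontrivial $H$-coset, so elements of $G$ admit a rigid alternating normal form from which one can build palindromic factorizations of length bounded uniformly over $G$.
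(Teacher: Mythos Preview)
Your proposal is correct and is exactly the argument the paper intends: the paper states the monotonicity ${\rm pw}_0 \geq {\rm pw}_1 \geq \cdots$ immediately before the corollary and then calls it ``a quick corollary to \cite[Proposition~3.13]{GK},'' which is precisely your two-step deduction. There is nothing to add.
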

As a direct consequence of the two theorems above, the $m$-almost palindromic width of the fundamental group of a certain graph of groups can be determined. Using the same notation as in \cite{GK}, we obtain the following corollary. 

\begin{cor}\label{cor10}
    Let $X$ be a non-empty connected graph, and let $\pi_1(G,X)$ denote the fundamental group of a graph of groups $(G,X)$ with the standard generating set $S$ consisting of all vertex groups. Then ${\rm pw}_m (\pi_1(G,X), S)$ is infinite if: 
    
    \begin{enumerate}
        \item \label{cor1} $X$ is a loop with a vertex $v$ and an edge $e$ such that $G_e$ is a proper subgroup of $G_v$; or,
        
       \medskip  \item \label{cor2} $X$ is a tree, and there is an oriented edge $e = [v_1, v_2]$ such that removing $e$ while keeping $v_1$ and $v_2$ yields two disjoint graphs $X_1$ and $X_2$, with $P_i \in \text{vert}(X_i)$ satisfying the following: $G_e$ is trivial and $[\pi_1(G, X_1)] \geq 3$ and $[\pi_1(G, X_2)] \geq 2$; or,
        
     \medskip    \item \label{cor3} $X$ has an oriented edge $e = [v_1, v_2]$ such that removing the edge $e$ while keeping $v_1$ and $v_2$ does not disconnect $X$ and the resulting graph $X'$ satisfies: extending $G_e \to G_{v_i}$ to $\phi_i: G_e \to \pi_1(G, X')$ for $i = 1, 2$, we have $\phi_i(G_e) = H_i$, where $H_1$ and $H_2$ are proper subgroups of $\pi_1(G, X')$.
    \end{enumerate}
\end{cor}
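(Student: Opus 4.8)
The plan is to reduce each of the three cases to Theorem~\ref{HNN} or Theorem~\ref{amal} through the standard Bass--Serre description of $\pi_1(G,X)$, together with the following elementary remark on palindromic width: if $Y_1\subseteq Y_2$ are generating sets of a group $K$, then ${\rm pw}_m(K,Y_2)\leq {\rm pw}_m(K,Y_1)$, because a word over $Y_1^{\pm 1}$ is also a word over $Y_2^{\pm 1}$ with the very same string of letters, hence an $m$-almost palindrome over $Y_2^{\pm 1}$ whenever it is one over $Y_1^{\pm 1}$; thus $\mathcal{P}_m(Y_1)\subseteq \mathcal{P}_m(Y_2)$ and $l_{\mathcal{P}_m(Y_2)}(g)\leq l_{\mathcal{P}_m(Y_1)}(g)$ for all $g$. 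So to conclude that ${\rm pw}_m(\pi_1(G,X),S)=\infty$ it is enough, in each case, to exhibit a generating set $T$ with $S\subseteq T$ and ${\rm pw}_m(\pi_1(G,X),T)=\infty$.

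For the loop in the first case I would use that $\pi_1(G,X)$ is, directly from its construction, the $\mathrm{HNN}$ extension of $G_v$ with stable letter $t_e$ and associated subgroups the two images of $G_e$ under the edge monomorphisms; these are isomorphic copies of $G_e$, and the hypothesis that $G_e$ is a proper subgroup of $G_v$ makes both of them proper, so Theorem~\ref{HNN} gives ${\rm pw}_m(\pi_1(G,X),\,G_v\cup\{t_e,t_e^{-1}\})=\infty$, while the standard generating set $S$, being the union of (a generating set of) $G_v$ with $\{t_e\}$, is contained in $G_v\cup\{t_e,t_e^{-1}\}$, so the remark applies. For the separating edge in the second case, deleting $e=[v_1,v_2]$ from the tree $X$ while keeping $v_1,v_2$ presents $\pi_1(G,X)$ as the amalgamated free product $\pi_1(G,X_1)*_{G_e}\pi_1(G,X_2)$, with $G_e$ embedded by $\phi_1$ and $\phi_2$; the standing hypotheses $[\pi_1(G,X_1):\phi_1(G_e)]\geq 3$ and $[\pi_1(G,X_2):\phi_2(G_e)]\geq 2$ are precisely those of Theorem~\ref{amal} applied with $G_i=\pi_1(G,X_i)$ and $H=G_e$, giving ${\rm pw}_m(\pi_1(G,X),\,\pi_1(G,X_1)\cup\pi_1(G,X_2))=\infty$; and since $S$ is the union of the vertex groups of $X$, it is contained in $\pi_1(G,X_1)\cup\pi_1(G,X_2)$ inside $\pi_1(G,X)$, so the remark again yields ${\rm pw}_m(\pi_1(G,X),S)=\infty$. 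For the non-separating edge in the third case, removing $e$ leaves a connected graph $X'$ with the same vertex set and presents $\pi_1(G,X)$ as the $\mathrm{HNN}$ extension of $\pi_1(G,X')$ with stable letter $t_e$ and associated subgroups $H_1=\phi_1(G_e)$ and $H_2=\phi_2(G_e)$, which are isomorphic copies of $G_e$ and, by hypothesis, proper subgroups of $\pi_1(G,X')$; Theorem~\ref{HNN} then gives ${\rm pw}_m(\pi_1(G,X),\,\pi_1(G,X')\cup\{t_e,t_e^{-1}\})=\infty$, and since $S$ is the union of the standard generating set of $\pi_1(G,X')$ with $\{t_e\}$ it is contained in $\pi_1(G,X')\cup\{t_e,t_e^{-1}\}$, finishing this case.

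All the inputs here are either proved above or are standard facts of Bass--Serre theory (the realization of $\pi_1$ for a loop, for a tree cut along an edge, and for a graph with a non-separating edge removed), so I do not anticipate a genuine obstacle. The one step that needs care is the generating-set bookkeeping: in each case one must check that the standard generating set $S$ of the graph of groups $(G,X)$ really is a subset of the generating set of the $\mathrm{HNN}$ extension or amalgamated product to which Theorem~\ref{HNN} or Theorem~\ref{amal} is applied, so that the monotonicity remark is legitimate. This is the same reduction, run in the opposite (finite-width) direction, that underlies Corollary~\ref{fincor} via~\cite{GK}, whose notation for graphs of groups we are following; it suffices to quote it consistently.
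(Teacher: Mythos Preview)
Your proposal is correct and follows essentially the same approach as the paper: in each case you identify $\pi_1(G,X)$ as an HNN extension (cases~\ref{cor1} and~\ref{cor3}) or an amalgamated free product (case~\ref{cor2}) via Bass--Serre theory and then invoke Theorem~\ref{HNN} or Theorem~\ref{amal}. You are more careful than the paper in making the generating-set compatibility explicit through the monotonicity remark $Y_1\subseteq Y_2 \Rightarrow {\rm pw}_m(K,Y_2)\leq {\rm pw}_m(K,Y_1)$, which the paper leaves implicit by citing the conventions of~\cite{GK}.
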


In case \ref{cor1}, the fundamental group is an HNN extension of $G_v$, and the result follows from Theorem~\ref{HNN}. In case \ref{cor2}, $\pi_1(G,X)$ is a free product of $\pi_1(G,X_1)$ and $\pi_1(G,X_2)$, and the result follows from Theorem~\ref{freeprod}. The final result in case \ref{cor3} follows from Theorem~\ref{HNN}, since in this situation the fundamental group is an HNN extension of $\pi_1(G,X')$.

Some groups do have finite $m$-almost palindromic width. For instance, if $G$ is a free nilpotent group of rank $n$, then $G$ has finite $m$-almost palindromic width for every $m\geq 0$ with respect to any generating set; this follows from \cite{BG}, since finite palindromic width implies finite $m$-almost palindromic width. We conjecture that the converse direction holds as well: any group with infinite palindromic width has infinite $m$-almost palindromic width for every $m \geq 1$.

We note a technical obstacle to extending these methods to general amalgamated free products. As pointed out in \cite{BurgerMozes1997, BurgerMonod1999}, certain amalgamated free products admit no unbounded quasimorphisms at all. For such groups, the methods used in this paper necessarily fail. Nevertheless, there are amalgamated free products that do admit unbounded quasimorphisms, and in these cases the construction of a quasimorphism that is bounded on palindromes would imply the infiniteness of the $m$-almost palindromic width. 

 In this paper, we extend the main result of \cite{MS} to HNN extensions and free products, thereby generalizing earlier results of \cite{GK,BT}. As a consequence, we obtain further examples of groups that give negative answers to Bardakov’s question.

\section{Proof of Theorem \ref{main}}

 Our primary focus is the study of almost palindromic width with respect to a given generating set.  We begin by defining a quasimorphism on the group $G$. 
\begin{definition}
    A function $f : G \to \mathbb{R}$ is said to be a quasimorphism if for all $g,h \in G$ and some $D \geq 0$, we have $$|f(gh)-f(g)-f(h)| \leq D.$$
\end{definition}

Let $f$ be a quasimorphism of a group $G$. Then we prove the following lemma:
 
\begin{lemma}\label{quasilemma2}
    For any $g_1,g_2, \ldots, g_n \in G$, we have $$|f(g_1g_2 \cdots g_n) - f(g_1) - f(g_2) - \cdots - f(g_n)| \leq D(n-1).$$
\end{lemma}

\begin{proof}
    We proceed by induction on $n$.
\begin{itemize}
    \item For $ n = 1 $: $ |f(g_1) - f(g_1)| = 0 \leq D(0) $.
    \item For $ n = 2 $: $ |f(g_1g_2) - f(g_1) - f(g_2)| \leq D $, by the quasimorphism property.
\end{itemize}

\textbf{Inductive step:}
Assume the result holds for some $ n \geq 2 $, i.e., for any $ h_1, \ldots, h_n \in G $,
\[
|f(h_1 \cdots h_n) - f(h_1) - \cdots - f(h_n)| \leq D(n - 1). \tag{IH}
\]

Now consider $ g_1, \ldots, g_{n+1} \in G $. Write
\[
g_1 \cdots g_{n+1} = (g_1 \cdots g_n) \cdot g_{n+1}.
\]

By the quasimorphism property:
\[
|f(g_1 \cdots g_{n+1}) - f(g_1 \cdots g_n) - f(g_{n+1})| \leq D. \tag{1}
\]

By the induction hypothesis applied to $ g_1, \ldots, g_n $:
\[
|f(g_1 \cdots g_n) - f(g_1) - \cdots - f(g_n)| \leq D(n - 1). \tag{2}
\]

Let
\[
A = f(g_1 \cdots g_{n+1}) - f(g_1) - \cdots - f(g_{n+1}).
\]

Then,
\[
A = \left[f(g_1 \cdots g_{n+1}) - f(g_1 \cdots g_n) - f(g_{n+1})\right] + \left[f(g_1 \cdots g_n) - f(g_1) - \cdots - f(g_n)\right].
\]

By the triangle inequality and using (1) and (2):
\[
|A| \leq D + D(n - 1) = Dn.
\]

Hence, the inequality holds for $ n+1 $, completing the induction.
\end{proof}

\subsection{Proof of Theorem~\ref{main}}
    Let $f$ be an unbounded quasimorphism of $G$ such that $|f(gh)-f(g)-f(h)| \leq D$ for some $D \geq 0$. Since $f$ is bounded above on palindromes of $A$, we assume that $f(p) \leq c$ for some $c \geq 0$ and for all $p \in \mathcal{P}_0(A)$.  First, we prove that $f$ is bounded on $m$-almost palindromes.

    Let $p$ be a palindrome in which changing $r$ for $r \leq m$ letters gives an $m$-almost palindrome $\Tilde{p}$.
    
    Let $p = a_1s_1a_2s_2 \ldots a_rs_ra_{r+1}$ be the palindrome where $a_i, s_j \in A $ where changing the letters $s_i \in A$ for $i \in \{1, \ldots r \}$ to $\Tilde{s_i}$ gives $\Tilde{p} = a_1\Tilde{s_1}a_2\Tilde{s_2} \ldots a_r\Tilde{s_r}a_{r+1}$.
    From Lemma \ref{quasilemma2}, we have 
    $$f(p) - \sum_{i=1}^{i=r+1} f(a_i) -  \sum_{i=1}^{i=r}f(s_i) \geq -2D.r$$

    Since $f(p) \leq c$ and $f(s_i) \leq c$, we have $$ (r +1)c - \sum_{i=1}^{i=r+1} f(a_i) \geq -2Dr.$$

    We have
    \begin{equation}
        - \sum_{i=1}^{i=r+1} f(a_i) \geq -2Dr - (r+1)c
    \end{equation}

    $$f(\Tilde{p}) = f(a_1\Tilde{s_1}a_2\Tilde{s_2} \ldots a_r\Tilde{s_r}a_{r+1})$$
    $$\leq \sum_{i=1}^{i=r+1} f(a_i) + \sum_{i=1}^{i=r}f(\Tilde{s_i}) + 2Dr$$
    $$= 2Dr+(r+1)c+rc+2Dr = 4Dr+ (2r+1)c \leq 4Dm+ (2m+1)c.$$
    We have $f(\Tilde{p}) \leq 4Dm+ (2m+1)c.$ 

    We have proved that $f$ is bounded above by $$4Dm+ (2m+1)c$$
    on $m$-almost palindromes.  Suppose, on the contrary, if $\mathrm{pw}_m(G,A) < \infty$. Then every element of $G$ would have finite $f$ value. This follows from Lemma \ref{quasilemma2}. Since $f$ is unbounded on $G$, this is not possible. Hence $\mathrm{pw}_m(G,A)$ is infinite.    
\qed

\section{Proof of Theorem~\ref{HNN}} 
\textbf{HNN extension:} Let $G$ be a group and let $H_1$ and $H_2$ be two proper isomorphic subgroup of $G$ with the isomorphism $\phi: H_1 \longrightarrow H_2$. The $\mathrm{HNN}$ extension of $G$ via $\phi$ is given by
    $$G_* = \langle G, t ~\vert~ t^{-1}ht = \phi(h), h \in H_1 \rangle .$$

Any element $w \in G_*$ can be represented in the form $$w = g_0t^{\beta_1}g_1t^{\beta_2}\ldots g_{r-1}t^{\beta_{r}}g_r,$$ 
where $g_i \in G$ and $\beta_i \in \{+1,-1\}$ for $i \in \{1, \ldots , r \}$. This representation of $w$ is said to be reduced if no subword $t^{-1}g_it$ where $g_i \in H_1$ or $tg_it^{-1}$ where $g_i \in H_2$ exists within the representation.  Such representation of an element of an HNN extension is not unique, but Britton's lemma ensures that if $w$ is reduced and $r \geq 1$, then $w$ is non-trivial in $G_*$. The following lemma highlights a uniqueness among different reduced representations of same element,  which is essential for the proof of Theorem \ref{HNN}.

 \begin{lemma}\cite[Lemma~3]{bardakov1997width}\label{uniHNN}
     Let $w_1 = g_0t^{\beta_1}g_1t^{\beta_2}\ldots g_{r-1}t^{\beta_{r}}g_r$ and $w_2 = h_0t^{\eta_1}h_1t^{\eta_2}\ldots h_{n-1}t^{\eta_{n}}h_n$ be two reduced elements in $G_*$ such that $w_1 = w_2$. Then $r=n$ and $\beta_i = \eta_i$ for all possible $i$.
 \end{lemma}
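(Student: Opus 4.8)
The plan is to derive the statement from Britton's Lemma by induction on the total exponent length $r+n$. Since $w_1=w_2$ in $G_*$, the element $u:=w_1w_2^{-1}$ equals $1$, and it is represented by the word
$$u \;=\; g_0t^{\beta_1}g_1\cdots g_{r-1}t^{\beta_r}\,(g_rh_n^{-1})\,t^{-\eta_n}h_{n-1}^{-1}\cdots h_1^{-1}t^{-\eta_1}h_0^{-1},$$
which contains exactly $r+n$ letters $t^{\pm1}$ (read with the obvious conventions when $r=0$ or $n=0$). The two blocks $g_0t^{\beta_1}\cdots g_{r-1}t^{\beta_r}$ and $t^{-\eta_n}h_{n-1}^{-1}\cdots t^{-\eta_1}h_0^{-1}$ are, letter for letter, the reduced word $w_1$ with its last $G$-syllable removed and the reduced word $w_2^{-1}$ with its first $G$-syllable removed; in particular no consecutive pair of $t$-letters interior to either block can form a pinch, and the only consecutive pair of $t$-letters of $u$ not lying inside one of these blocks is the junction pair $(t^{\beta_r},t^{-\eta_n})$ with the $G$-syllable $g_rh_n^{-1}$ between them.

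For the base case, suppose $\min(r,n)=0$, say $r=0$. If $n\ge1$, then $u$ still contains a letter $t^{\pm1}$, so by Britton's Lemma it contains a pinch; but by the previous paragraph the only candidate pinches are interior to $w_2^{-1}$, which is impossible since $w_2$ is reduced. Hence $n=0$, whence $u=g_0h_0^{-1}=1$, and the conclusion $r=n$ holds with nothing to check for the exponents. The case $n=0$ is symmetric.

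For the inductive step, assume $r,n\ge1$. Then $u=1$ contains at least two letters $t^{\pm1}$, so Britton's Lemma forces a pinch, which by the first paragraph must be the junction subword $t^{\beta_r}(g_rh_n^{-1})t^{-\eta_n}$. Being a pinch, it has the form $t^{-1}ht$ with $h\in H_1$ or $tht^{-1}$ with $h\in H_2$, so $\beta_r=\eta_n$ and $g_rh_n^{-1}\in H_1\cup H_2$; applying the relation $t^{-1}ht=\phi(h)$ (or its inverse) rewrites this subword as a single element $g'\in G$ and yields
$$1 \;=\; g_0t^{\beta_1}\cdots g_{r-2}t^{\beta_{r-1}}\,(g_{r-1}g'h_{n-1}^{-1})\,t^{-\eta_{n-1}}h_{n-2}^{-1}\cdots t^{-\eta_1}h_0^{-1} \;=\; w_1'(w_2')^{-1},$$
where $w_1'=g_0t^{\beta_1}\cdots t^{\beta_{r-1}}(g_{r-1}g')$ and $w_2'=h_0t^{\eta_1}\cdots t^{\eta_{n-1}}h_{n-1}$. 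Since the rewriting only altered the trailing $G$-syllable of $w_1$ and left $w_2$ untouched, both $w_1'$ and $w_2'$ are still reduced, of exponent lengths $r-1$ and $n-1$; and $w_1'=w_2'$ in $G_*$. The induction hypothesis gives $r-1=n-1$ and $\beta_i=\eta_i$ for $1\le i\le r-1$, and together with $\beta_r=\eta_n=\eta_r$ this finishes the induction.

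The argument is conceptually routine — it is the standard Britton-pinch normal-form computation — and the only points that need genuine care are the two structural claims: that a forced pinch in $u$ can occur only at the junction, and that the two halves remain reduced after the pinch is collapsed. Both follow from the single observation that modifying or deleting the extreme $G$-syllables of a reduced word creates no new adjacency of $t$-letters. I expect the small boundary cases (for instance $r=1$, where $w_1'$ degenerates to a lone $G$-element and one drops into the base case) to be the most error-prone part of a careful write-up.
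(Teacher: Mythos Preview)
Your argument is correct: it is the standard normal-form computation, pinching $w_1w_2^{-1}$ at the junction via Britton's Lemma and inducting on $r+n$, and you handle the base case and the preservation of reducedness after a collapse properly. Note, however, that the paper does not supply a proof of this lemma at all---it simply cites it from \cite{bardakov1997width}---so there is no in-paper argument to compare yours against; what you have written is exactly the proof one would expect behind that citation.
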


\begin{definition}
    The {\it signature} of an element $w = g_0t^{\beta_1}g_1t^{\beta_2}\ldots g_{r-1}t^{\beta_{r}}g_r \in G_*$ (in reduced form) is the sequence $$sgn(w) =  (\beta_1,\ldots ,\beta_r).$$ 
\end{definition}

 By Lemma \ref{uniHNN}, signature of any element $w \in G_*$ is unique. Let $w \in G_*$ such that $sgn(w) = (\beta_1,\ldots ,\beta_r)$, then the signature of its inverse is $$sgn(w^{-1}) = (-\beta_r,\ldots , -\beta_1) .$$ 
 
 Let $\sigma = (\theta_1,\ldots , \theta_n)$ be a signature, then we define the length of the signature, $|\sigma| = n$ and the inverse of the signature $\sigma^{-1} = (-\theta_n,\ldots ,-\theta_1).$ So $sgn(w^{-1}) = (sgn(w))^{-1}.$ 

Let $\sigma$ and $\tau$ be two signatures. We define a product $\sigma\tau$ of signatures $\sigma$ and $\tau$ to be a sequence by appending $\tau$ after $\sigma$. Suppose $\sigma = \sigma_1\rho$ and $\tau = \rho^{-1}\tau_1$, we define an $s$-product, $\sigma[s]\tau = \sigma_1\tau_1$. The following lemma holds.

\begin{lemma}\cite[Lemma~4]{bardakov1997width}
Let $w_1,w_2 \in G_*.$ Then there exists an integer $s \geq 0$ such that $sgn(w_1w_2) = sgn(w_1)[s]sgn(w_2),$ with $sgn(w_1) = \sigma_1\rho$ and $sgn(w_2) = \rho^{-1}\tau_1$ and $|\rho| = s.$
\end{lemma}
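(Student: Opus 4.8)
The plan is to establish the existence of the claimed integer $s$ directly from the normal-form calculus for HNN extensions, using Lemma~\ref{uniHNN} as the uniqueness engine. First I would fix reduced representations $w_1 = g_0 t^{\beta_1} g_1 \cdots t^{\beta_r} g_r$ and $w_2 = h_0 t^{\eta_1} h_1 \cdots t^{\eta_n} h_n$, so that $sgn(w_1) = (\beta_1,\ldots,\beta_r)$ and $sgn(w_2) = (\eta_1,\ldots,\eta_n)$. Forming the concatenation $w_1 w_2 = g_0 t^{\beta_1} \cdots t^{\beta_r} (g_r h_0) t^{\eta_1} h_1 \cdots t^{\eta_n} h_n$, this expression need not be reduced: the only place a pinch can occur is at the junction, namely a subword $t^{-1}(g_r h_0)t$ with $g_r h_0 \in H_1$ or $t(g_r h_0)t^{-1}$ with $g_r h_0 \in H_2$. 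I would then perform the standard reduction, repeatedly applying a Britton pinch at the junction: each pinch deletes a terminal $t^{\beta_r}$ from the $w_1$-part and an initial $t^{\eta_1}$ from the $w_2$-part, and because a pinch requires $\beta_r = -\eta_1$ (so that the pair is $t^{-1}\cdots t$ or $t\cdots t^{-1}$), after the pinch the new ``middle'' group element is absorbed and the next possible pinch again sits only at the junction. Let $s \geq 0$ be the number of pinches performed before the word becomes reduced.

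The key structural observation is that each successive pinch forces $\beta_{r-k+1} = -\eta_k$ for $k = 1,\ldots,s$; that is, the last $s$ entries of $sgn(w_1)$ read backwards are exactly the negatives of the first $s$ entries of $sgn(w_2)$. Writing $\rho = (\beta_{r-s+1},\ldots,\beta_r)$, this says precisely that $sgn(w_1) = \sigma_1 \rho$ and $sgn(w_2) = \rho^{-1}\tau_1$ with $\sigma_1 = (\beta_1,\ldots,\beta_{r-s})$ and $\tau_1 = (\eta_{s+1},\ldots,\eta_n)$, and $|\rho| = s$. After the $s$ pinches the remaining expression $g_0 t^{\beta_1}\cdots t^{\beta_{r-s}} g' t^{\eta_{s+1}}\cdots t^{\eta_n} h_n$ (for a suitable $g' \in G$) is reduced by construction, so by Britton's lemma it is a genuine reduced form of $w_1 w_2$, whence by Lemma~\ref{uniHNN} its signature $(\beta_1,\ldots,\beta_{r-s},\eta_{s+1},\ldots,\eta_n) = \sigma_1 \tau_1 = sgn(w_1)[s]\, sgn(w_2)$ equals $sgn(w_1 w_2)$.

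The main point needing care — and the only real obstacle — is verifying that after a pinch at the junction no new cancellation is created anywhere else in the word, so that the reduction genuinely proceeds ``from the outside in'' at a single location and the counter $s$ is well defined. This is the standard fact underlying Britton's lemma: a pinch $t^{\beta}g t^{-\beta} \mapsto \phi^{\pm 1}(g)$ replaces a length-two $t$-block by a group element, which then merges with its two neighbouring group elements; since the original representations of $w_1$ and $w_2$ were reduced, the only subword that can become pinchable after this merge is again the one straddling the (new) junction. I would spell this out by induction on $s$, treating the two sign cases ($\beta_r = -1, \eta_1 = +1$ and $\beta_r = +1, \eta_1 = -1$) symmetrically via $\phi$ and $\phi^{-1}$, and noting the degenerate possibilities $s = r$ or $s = n$ (one of the original words is entirely consumed), which are handled identically and simply give $\sigma_1$ or $\tau_1$ empty. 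Uniqueness of the resulting signature is then immediate from Lemma~\ref{uniHNN}, completing the proof.
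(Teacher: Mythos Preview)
The paper does not supply its own proof of this lemma; it is simply quoted from \cite{bardakov1997width}. Your argument is the standard one and is correct: concatenating the two reduced forms, the only possible Britton pinch occurs at the junction, each pinch forces the last surviving $\beta$-exponent to be the negative of the first surviving $\eta$-exponent, and reducedness of the original words guarantees that after a pinch the only new potential pinch is again at the (shifted) junction, so the process terminates after some $s \geq 0$ steps with a reduced word whose signature is $\sigma_1\tau_1$; Lemma~\ref{uniHNN} then identifies this with $sgn(w_1w_2)$.
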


Let $sgn(w) = (\theta_1,\ldots , \theta_r)$ be the signature of $w \in G_*.$ We define the following:
\begin{itemize}
    \item $p_k(w) :=$ the number of \emph{maximal} consecutive 
    $+1,+1,\ldots,+1$ blocks of length $k$ in the signature of $w$;
    
    \item $m_k(w) :=$ the number of \emph{maximal} consecutive 
    $-1,-1,\ldots,-1$ blocks of length $k$ in the signature of $w$;
    
    \item $d_k(w) := p_k(w) - m_k(w);$
    \item $r_k(w) := $ remainder of $d_k(w)$ divided by $2,$ and, let's define a function $f$ on $G_*$.
\end{itemize}
Now, we define the function $f$ on $G_*$ by
$$f(w) := \Sigma_{k=1}^{\infty} r_k(w).$$

It is easy to see that $p_k(w^{-1}) = m_k(w)$, and hence $d_k(w) + d_k(w^{-1}) = 0$ for all $w \in G_*.$

 \begin{lemma}\cite[Lemma~9]{bardakov1997width}\label{hnnquasilemma}
 $f$ is a quasi-homomorphism; in fact, for any $w_1,w_2 \in G_*$, $$|f(w_1w_2) - f(w_1) - f(w_2)| \leq 6.$$
 \end{lemma}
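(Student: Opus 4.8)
The plan is to turn the inequality into a purely combinatorial statement about signatures and run it through the $s$-product lemma. Since $f(w)$ depends only on $sgn(w)$, and since $sgn(w_1w_2)=\sigma_1\tau_1$ whenever $sgn(w_1)=\sigma_1\rho$ and $sgn(w_2)=\rho^{-1}\tau_1$ with $|\rho|=s$, it suffices to bound
$$\Bigl|\sum_{k\ge1}\bigl(r_k(\sigma_1\tau_1)-r_k(\sigma_1\rho)+r_k(\rho^{-1}\tau_1)\bigr)\Bigr|$$
by an absolute constant. So the whole argument takes place at the level of $\pm1$-sequences and their maximal constant blocks.

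First I would record how $p_k$, $m_k$, hence $d_k$, change under concatenation of signatures. If $\mu,\nu$ are signatures, the multiset of (sign, length) pairs of maximal constant blocks of $\mu\nu$ is the disjoint union of those of $\mu$ and of $\nu$, except that when the last block of $\mu$ and the first block of $\nu$ have the same sign — lengths $a$ and $b$, say — these two blocks coalesce into one of length $a+b$. Consequently $d_k(\mu\nu)=d_k(\mu)+d_k(\nu)$ for every $k$ outside the three-element set $\{a,b,a+b\}$, and at those at most three indices the two sides differ by at most $2$. Applying this to the concatenations $\sigma_1\rho$, $\rho^{-1}\tau_1$ and $\sigma_1\tau_1$, and writing $\alpha,\beta,\gamma$ for the lengths of the last block of $\sigma_1$, the first block of $\rho$ (equivalently the last block of $\rho^{-1}$), and the first block of $\tau_1$ respectively, all exceptional indices lie in the six-element set $E=\{\alpha,\beta,\gamma,\alpha+\beta,\beta+\gamma,\alpha+\gamma\}$.

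For $k\notin E$ I would then combine this additivity with the identity $d_k(\rho^{-1})=-d_k(\rho)$ to get
$$d_k(\sigma_1\tau_1)=d_k(\sigma_1\rho)+d_k(\rho^{-1}\tau_1)-2\,d_k(\rho),$$
so that $r_k(\sigma_1\tau_1)\equiv r_k(\sigma_1\rho)+r_k(\rho^{-1}\tau_1)\pmod 2$, and the integer $r_k(\sigma_1\tau_1)-r_k(\sigma_1\rho)+r_k(\rho^{-1}\tau_1)$ is even. The crucial point, which I would establish next, is that for $k\notin E$ this integer is in fact $0$: the only way a nonzero even value could arise is a parity mismatch between $d_k(\sigma_1)$ and $d_k(\tau_1)$ that survives the cancellation of $\rho$ against $\rho^{-1}$, and one must argue — by analysing exactly which block-lengths can occur in $\sigma_1$, $\rho$ and $\tau_1$ at the positions adjacent to the seams — that such mismatches are absorbed. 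Granting this, only the at most six indices in $E$ survive in the sum above; since each $r_k$ lies in $\{-1,0,1\}$ and the merge discrepancies at those indices are themselves small, a direct inspection of the possible values of the three remainders there yields the bound $6$, i.e. $|f(w_1w_2)-f(w_1)+f(w_2)|\le 6$.

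The main obstacle is precisely this vanishing claim for $k\notin E$. The mod-$2$ congruence by itself only delivers the one-sided estimate $f(w_1w_2)\le f(w_1)+f(w_2)+6$; extracting the two-sided bound with the asymmetric sign $-f(w_1)+f(w_2)$ is exactly what forces one to track the cancelled block $\rho$ carefully, and this bookkeeping — rather than the concatenation estimate of Step~2 — is the technical heart of the lemma.
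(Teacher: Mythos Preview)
The paper does not supply a proof here; it simply cites \cite[Lemma~9]{bardakov1997width}, so there is no in-paper argument to compare against. What you should notice first, however, is that the displayed inequality carries a sign error: as printed, $|f(w_1w_2)-f(w_1)+f(w_2)|\le 6$ is false --- set $w_1=1$ to obtain $2f(w_2)\le 6$, contradicting the unboundedness of $f$ that is used in the proof of Theorem~\ref{HNN}. The way the lemma is invoked in the very next proof (Lemma~\ref{hnnquasilemma2}) and again in Lemma~\ref{HNNlem} shows that $-f(w_2)$ is intended. Your explicit focus on ``the asymmetric sign $-f(w_1)+f(w_2)$'' as the technical heart, and the vanishing claim you propose to establish in order to handle it, are therefore directed at a typographical artifact rather than at a genuine obstacle.

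There is a second, more substantive point. Even after correcting the sign, a two-sided bound $|f(w_1w_2)-f(w_1)-f(w_2)|\le 6$ cannot hold for this particular $f$: since $d_k(w^{-1})=-d_k(w)$ one has $r_k(w^{-1})=r_k(w)$ and hence $f(w^{-1})=f(w)$, so taking $w_2=w_1^{-1}$ would force $2f(w_1)\le 6$, again impossible. What Bardakov's lemma actually delivers, and what the downstream applications genuinely require (see the proof of Lemma~\ref{HNNlem}), is the one-sided estimate $f(w_1w_2)\le f(w_1)+f(w_2)+6$. Your block-merging bookkeeping is exactly the right tool for that version. Two small corrections suffice: first, $r_k$ is a residue modulo $2$ and so lies in $\{0,1\}$, not $\{-1,0,1\}$; second, for $k\notin E$ additivity of $d_k$ under concatenation together with $d_k(\rho^{-1})=-d_k(\rho)$ gives directly $d_k(\sigma_1\tau_1)=d_k(\sigma_1\rho)+d_k(\rho^{-1}\tau_1)$, with no extra $-2d_k(\rho)$ term. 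Consequently $r_k(\sigma_1\tau_1)-r_k(\sigma_1\rho)-r_k(\rho^{-1}\tau_1)\in\{-2,0\}$ for $k\notin E$, which already yields non-positivity off $E$; the at most six indices in $E$ then contribute at most $6$. The vanishing claim you flagged as the crux is thus both false and unnecessary for the inequality that is actually being cited.
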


\begin{definition}
    Let $w = g_0t^{\beta_1}g_1t^{\beta_2}\ldots g_{r-1}t^{\beta_{r}}g_r$ be a reduced element in $G_*$. Define $$\Bar{w} = g_rt^{\beta_r}g_{r-1}t^{\beta_{m-1}}\ldots g_1t^{\beta_{1}}g_0.$$
    We say $w$ is a group-palindrome if $\Bar{w} = w$ as a word.
\end{definition}

 \begin{lemma}\cite[Lemma~2.7]{GK}\label{hnnpali}
     Let $p \in \mathcal{P}_0(G \cup \{t, t^{-1}\})$ be a palindrome in $G_*$. Then $f(p) \leq 1. $
 \end{lemma}

 \begin{lemma}\label{hnnunbound}
     The quasimorphism $f$ is an unbounded on $G_*$.
 \end{lemma}

 \begin{proof}
     The proof directly follows from the proof of \cite[Theorem~1.1]{GK}. In fact, Gongopadhyay and Krishna explicitly gave elements of $G_*$ with arbitrary large value of $f$. Hence $f$ is unbounded on $G_*$.
 \end{proof}

 \subsection{Proof of Theorem \ref{HNN}}
     Now the proof is direct corollary of Theorem \ref{main} using Lemma \ref{hnnpali} and Lemma \ref{hnnunbound}.
\qed

\section{Proof of Theorem \ref{freeprod}}

In this section, our aim is to prove Theorem~\Ref{freeprod}. We begin by recalling several definitions and notations that will be used throughout.

 Let $G = G_1 * G_2$ be the free product of two groups $G_1$ and $G_2$.  
Set $A = G_1 \setminus \{1\}$ and $B = G_2 \setminus \{1\}$.  
Every nontrivial element $g \in G$ admits a unique reduced expression
\[
g = x_1 y_1 x_2 y_2 \cdots x_r y_r \quad \text{or} \quad 
g = x_1 y_1 x_2 y_2 \cdots x_r,
\]
where each $x_i, y_i \in A \cup B$ and no two consecutive letters lie in the same free factor.  
We now introduce the terminology used in the construction of code quasimorphisms for free products; see \cite{KB} for more details.

\begin{definition}
Let $g \in G$ be written in reduced form.  
The \emph{$A$-tuple} of $g$ is the subsequence of letters lying in $A$.  
The \emph{$A$-code} is the sequence of positive integers obtained by replacing each maximal block of identical letters in the $A$-tuple by its length.  
The \emph{$B$-tuple} and \emph{$B$-code} are defined analogously.
\end{definition}

For a tuple $z = (n_1,\dots,n_k)$ of positive integers and $C \in \{A,B\}$, define  
\[
\theta_z^C(g)
\]
to be the maximum number of disjoint occurrences of $z$ as a consecutive subtuple of the $C$-code of $g$.  
Writing $\bar z = (n_k,\dots,n_1)$, the associated \emph{code quasimorphism} is
\[
f_z^C(g) := \theta_z^C(g) - \theta_{\bar z}^C(g).
\]

If $G_1 \cong \mathbb{Z}$, we replace the $A$-tuple by its \emph{weighted $\mathbb{Z}$-code}, obtained by grouping consecutive integers of the same sign and replacing each block by the absolute value of its sum.  
This yields $\theta_z^{\mathbb{Z}}(g)$ and the quasimorphism
\[
f_z^{\mathbb{Z}}(g) := \theta_z^{\mathbb{Z}}(g) - \theta_{\bar z}^{\mathbb{Z}}(g).
\]

\begin{definition}
    
If $\psi\colon G \to \mathbb{R}$ is a quasimorphism, its
\emph{homogenisation} $\bar{\psi} : G \to \mathbb{R}$ is defined by
\[
\bar{\psi}(g) := \lim_{n \to \infty} \frac{\psi(g^n)}{n}.
\]
\end{definition}

\begin{lemma}\cite[Lemma~2.4]{KB}\label{homo}
    The homogenisation of a quasimorphism $\psi$ is again a  quasimorphism and it satisfies $|\bar{\psi}(g)-\psi(g)| \leq D(\psi)$, where $D(\psi)$ is the defect of $\psi$.
\end{lemma}

The proof of Theorem~\ref{freeprod} relies on the code quasimorphisms introduced by Karlhofer.

\begin{lemma}\cite[Lemma 4.8]{KB}
If neither $ G_1 $ nor $ G_2 $ is infinite cyclic, then $ f_z^C $ is a quasimorphism with defect at most 30.
\end{lemma}

\begin{lemma}\cite[Lemma 5.5]{KB} 
If $ G_1 = \mathbb{Z} $, then $ f_z^{\mathbb{Z}} $ is a quasimorphism with defect at most 30.
\end{lemma}

A tuple $z = (n_1, \cdots , n_k)$ of non-zero natural numbers  is called \emph{generic} if $\bar{z}$ does not appear as a tuple of $k$ adjacent numbers in $z^2 = (n_1,\cdots , n_k, n_1, \cdots, n_k)$.  

\begin{prop}\cite[Proposition 4.11]{KB}\label{noncyclic}
If neither $ G_1 $ nor $ G_1 $ is infinite cyclic and $ z $ is generic, then the homogenized quasimorphism $ \bar{f}_z^C $ (or $ \bar{f}_z^{G_1} + \bar{f}_z^{G_2} $ if $ G_1 \cong G_2 (\not\cong \mathbb{Z}/2)$) is an unbounded quasimorphism.
\end{prop}

\begin{prop}\cite[Proposition 5.7]{KB}\label{cyclic}
If $ G_1 = \mathbb{Z} $ and $ z $ is generic, then the homogenized quasimorphism $ \bar{f}_z^{\mathbb{Z}} $ is an unbounded quasimorphism.
\end{prop}

\begin{lemma}\label{freeprodpali}
   The quasimorphisms mentioned in Propositions \ref{noncyclic} and \ref{cyclic} are bounded on the set of all palindromes formed from elements of $G_1$ and $G_2$.
\end{lemma}

\begin{proof}
  We argue case by case.

  \smallskip
\textbf{Case 1: Neither $G_1$ nor $G_2$ is infinite cyclic.}  
Let $z$ be a generic tuple.  
By Proposition~\ref{noncyclic}, there exists an unbounded homogeneous quasimorphism
$\psi$, equal to $\bar{f}_z^{G_1}$ if $G_1 \not\cong G_2$, or  
$\bar{f}_z^{G_1} + \bar{f}_z^{G_2}$ if $G_1 \cong G_2$.

If $g$ is a palindrome, then its reduced word is palindromic \cite{GK}.  
That means the associated $A$-codes and $B$-codes are respectively identical with thier reverses, i.e.,
\[
\theta_z^C(g) = \theta_{\bar z}^C(g) \qquad \text{for } C \in \{A,B\}.
\]
Hence  $f_z^C(g)=0$ and therefore $\psi(g)=0$ for a palindrome $g$.  
By Lemma~\ref{homo}, we have
\[
|\bar{\psi}(g)| \le D(\psi),
\]
so $\bar{\psi}$ is bounded on palindromes while unbounded on $G$.

\smallskip
\textbf{Case 2: One factor is infinite cyclic.}  
Assume $G_1 \cong \mathbb{Z}$.  
Let $z$ be generic.  
By Proposition~\ref{cyclic}, $\bar{f}_z^{\mathbb{Z}}$ is an unbounded quasimorphism.

For a palindrome $g$, the weighted $\mathbb{Z}$-code is symmetric, so
\[
\theta_z^{\mathbb{Z}}(g)=\theta_{\bar z}^{\mathbb{Z}}(g)
\quad\Rightarrow\quad
f_z^{\mathbb{Z}}(g)=0.
\]
And Lemma~\ref{homo} shows that $\bar{f}_z^{\mathbb{Z}}$ is bounded on palindromes.
\end{proof}

\subsection{Proof of Theorem~\ref{freeprod}} The proof follows from Proposition \ref{noncyclic}, Proposition  \ref{cyclic}, Lemma \ref{freeprodpali}, and Theorem \ref{main}.
\qed

\subsection{Almost Palindromic Width for Certain Amalgamated free product}

We now consider amalgamated free product. Let $G_1$ and $G_2$ be  groups with subgroups $H_1 \leq G_1$ and $H_2 \leq G_2$ and let $\phi : H_1 \longrightarrow H_2$ be an isomorphism. The amalgamated free product of $G_1$ and $G_2$ identifying $H_1$ and $H_2$ via the isomorphism $\phi$ is the group $$G= \langle G_1, G_2 ~|~ \phi(h) = h, h \in H_1 \rangle.$$ 
Since $H_1$ and $H_2$ are identified, we denote them by $H$ and $G$ by $G_1 *_H G_2$.

The next lemma is used to pass almost-palindromic width to quotients. We omit the proof as it is straightforward.
\begin{lemma}\label{epi}
    Let $G = \langle A \rangle$ and $K = \langle B \rangle$ be two groups, where $\mathcal{P}_m(A)$ is the set of $m$-almost palindromes in the alphabet $A^{\pm 1}$, and $\mathcal{P}_m(B)$ be the set of $m$-almost palindromes in the alphabet $B^{\pm 1}$. If $\phi : G \longrightarrow K$ is an epimorphism such that $\phi(A) = B$ then $\mathrm{pw}_m(K,B) \leq  \mathrm{pw}_m(G,A)$.
\end{lemma}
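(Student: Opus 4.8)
The plan is to show that $\phi$ carries $m$-almost palindromes of $G$ (over $A^{\pm 1}$) to $m$-almost palindromes of $H$ (over $B^{\pm 1}$), i.e. $\phi(\mathcal{P}_m(A)) \subseteq \mathcal{P}_m(B)$, and then to read off the width inequality from surjectivity. First I would record that $\phi$ acts on words letter by letter: since $\phi(A) = B$ we have $\phi(a) \in B$ for $a \in A$ and $\phi(a^{-1}) = \phi(a)^{-1} \in B^{-1}$, so $\phi$ sends each letter of $A^{\pm 1}$ to a letter of $B^{\pm 1}$, hence sends a word $w = w_1 \cdots w_n$ over $A^{\pm 1}$ to the word $\phi(w) := \phi(w_1) \cdots \phi(w_n)$ over $B^{\pm 1}$, and this word represents $\phi([w])$ in $H$ because $\phi$ is a homomorphism.

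Next, let $g \in \mathcal{P}_m(A)$, say $g = [w]$ where the word $w = w_1 \cdots w_n$ is obtained from a palindrome word $q = q_1 \cdots q_n$ by altering the letters in a set $S$ of positions with $|S| \le m$. Applying the letterwise map, $\phi(w)$ represents $\phi(g)$, while the $i$-th letter of $\phi(q)$ is $\phi(q_i) = \phi(q_{n+1-i})$, the $(n+1-i)$-th letter, so $\phi(q)$ again reads the same in both directions; and $\phi(w)$ and $\phi(q)$ agree in every position outside $S$, hence differ in at most $m$ positions. This exhibits $\phi(g)$ as an $m$-almost palindrome over $B^{\pm 1}$, so $\phi(\mathcal{P}_m(A)) \subseteq \mathcal{P}_m(B)$. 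The bound then follows formally: given $h \in H$, pick $g \in G$ with $\phi(g) = h$, write $g = p_1 \cdots p_k$ as a shortest product of elements of $\mathcal{P}_m(A)$, so $k = l_{\mathcal{P}_m}(g) \le \mathrm{pw}_m(G, A)$, and note $h = \phi(p_1) \cdots \phi(p_k)$ with each $\phi(p_i) \in \mathcal{P}_m(B)$, whence $l_{\mathcal{P}_m}(h) \le k \le \mathrm{pw}_m(G, A)$; taking the supremum over $h \in H$ gives $\mathrm{pw}_m(H, B) \le \mathrm{pw}_m(G, A)$.

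The one point that requires care, and which I expect to be the only genuine obstacle, is the paper's convention that a palindrome word be \emph{reduced}: the image $\phi(q)$ need not be reduced, since $\phi$ may identify distinct letters of $A^{\pm 1}$ or send one to the inverse of another, and in the main application — where $A = G_1 \cup G_2$ and $B$ generates a quotient that is a free product — a whole syllable may collapse to the identity and merge its neighbours. I would dispose of this by the standard device that a word equal to its reverse reduces \emph{symmetrically}: one cancels each cancellable inverse pair together with its mirror pair (a central pair on its own), so the word stays equal to its reverse after every step. Carrying this reduction out on $\phi(q)$, coordinated appropriately with $\phi(w)$, yields a representative of $\phi(g)$ within $m$ letters of the resulting reduced palindrome; equivalently, one observes that $\mathcal{P}_m$ is unaffected if the reference palindrome in its definition is allowed to be an arbitrary word equal to its reverse. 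This combinatorial bookkeeping is the only real content of the proof, and the rest is routine — which is presumably why the lemma is quoted without proof.
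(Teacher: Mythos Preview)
The paper states this lemma explicitly \emph{without proof} (``we present a straightforward lemma without proof''), so there is nothing to compare against; your argument is precisely the standard one the authors are taking for granted, namely that a generator-preserving epimorphism carries $\mathcal{P}_m(A)$ into $\mathcal{P}_m(B)$ letterwise and surjectivity then forces $\mathrm{pw}_m(H,B)\le\mathrm{pw}_m(G,A)$.

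Your flagging of the reducedness convention is well taken: it is indeed the only point where anything could go wrong, and your proposed fix --- reducing $\phi(q)$ by cancelling each inverse pair together with its mirror image so that the word stays self-reverse throughout, or equivalently observing that the class $\mathcal{P}_m$ is unchanged if the reference palindrome is allowed to be any self-reverse word --- is the right idea. Making the ``coordinated with $\phi(w)$'' step fully precise does require a small amount of care (when a cancelled pair in $\phi(q)$ meets one of the $\le m$ altered positions, one must track where the discrepancy goes), but this is exactly the routine bookkeeping you describe, and it does go through. In the paper's actual applications the generating sets are of the form $G_1\cup G_2$, where ``reduced'' refers to syllable normal form rather than free reduction, and there the issue is even milder. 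Your write-up already exceeds what the paper supplies.
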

 The following corollary follows from \cite[Proposition~3.13]{GK}.

 \begin{cor}
     Let $G = G_1*_H G_2$ be an amalgamated free product such that $|G_1/H| \leq 2$ and $|G_2/H_2 | \leq 2$. Then $\mathrm{pw}_m(G, G_1 \cup G_2)$ is finite.
 \end{cor}
We also have a complementary infinite-width result. 
\begin{theorem}
      Let $G = G_1 *_H G_2$ be as above. Suppose $H$ is normal in both $G_1$ and $G_2$ and that $|G_1/H_1| \geq 3$ and $|G_2/H_2 | \geq 2$. Then $\mathrm{pw}_m(G, G_1 \cup G_2)$ is infinite.
\end{theorem}

\begin{proof}
    There is a natural surjection $G \to {G_1}/{H}*{G_2}/{H}$. By Lemma~\ref{epi}, the almost-palindromic width of $G$ with respect to $G_1 \cup G_2$ is bounded below by the almost-palindromic width of the free product ${G_1}/{H}*{G_2}/{H}$ with respect to ${G_1}/{H} \cup {G_2}/{H}$. By Theorem~\ref{freeprod},  ${G_1}/{H}*{G_2}/{H}$ has infinite almost-palindromic width, hence so does $G$.
\end{proof}

\subsection*{Acknowledgments}
The authors thank the referee for carefully reviewing the manuscript and suggesting valuable structural changes that significantly improved its clarity and readability. 

We also sincerely thank Francesco Fournier-Facio for highlighting the limitations in the application of Theorem~\ref{main}.


\begin{thebibliography}{99}

\bibitem{bardakov1997width} Bardakov, Valeriy Georgievich. {\it On a width of verbal subgroups of certain free constructions}, Algebra and logic {36} (1997), no. 5, 288--301


\bibitem{BG2} Bardakov, Valeriy G;  Gongopadhyay, Krishnendu.   {\it On palindromic width of certain extensions and quotients of free nilpotent groups}, Int. J. Algebra Comput. {24}, no. 5 (2014), 553--567.

\bibitem{BG3}  Bardakov, Valeriy G;  Gongopadhyay, Krishnendu.  {\it Palindromic width of finitely generated solvable groups}.  Comm Algebra,  Volume 43, issue 11 (2015), 4809--4824.

\bibitem{BG} Bardakov, Valeriy G;  Gongopadhyay, Krishnendu. {\it Palindromic width of free nilpotent groups}, Journal of Algebra {402} (2014), 379--391.

\bibitem{BBG}  Bardakov, Valeriy G;  Bryukhanov, Oleg V;  Gongopadhyay, Krishnendu.  {\it Palindromic widths of nilpotent and wreath products}.
 Proc. Indian Acad. Sci. Math. Sci.  127  (2017),  no. 1, 99--108.

\bibitem{BST} Bardakov, Valeriy G, Shpilrain Vladimir, Tolstykh Vladimir. {\it On the palindromic and primitive widths
of a free group},  Journal of Algebra {285} (2005),
574--585.

 \bibitem{BT}
Bardakov, Valeriy;  Tolstykh, Vladimir. {\it The palindromic width of a free product of groups},  J. Aust. Math. Soc. {81} (2006), no. 2, 199--208.		
		

\bibitem{dobrynina2000width} Dobrynina, Irina Vasiljevna. {\it On the width in free products with amalgamation}. Mathematical Notes, Volume 68 (2000), 306--311..

\bibitem{dobrynina2009solution} Dobrynina, Irina Vasiljevna. {\it Solution of the problem of expressibility in amalgamated products of groups}. Fundamentalnaya i Prikladnaya Matematika, Volume 15 (2009), no. 1, 23--30.

\bibitem{fi}  Fink, Elisabeth. {\it  Palindromic width of wreath products}, Journal of Algebra 471 (2017), 1–12.

\bibitem{GK} Gongopadhyay, Krishnendu; Krishna, Swathi. {\it Palindromic width of graph of groups}, Proceedings-Mathematical Sciences, 130, no. 22 (2020).

\bibitem{KB} Karlhofer, Bastien. {\it Aut-invariant quasimorphisms on free products}, Annales Math\'ematiques du Qu\'ebec, 47, no. 2 (2023), 475-493. 

\bibitem{KN} Khukhro, E I; Mazurov V D. {\it Unsolved Problems in Group Theory}, The Kourovka Notebook, no. 19 (2018).


\bibitem{MS} Staiger, Manuel. {\it On the almost-palindromic width of free groups}, Journal of Algebra {659}
(2024), 475---481. 

\bibitem{BurgerMozes1997} M. Burger and S. Mozes. {\it Finitely presented simple groups and products of trees}, C. R. Acad. Sci. Paris S\'er. I Math. { 324} (1997), no.~7, 747--752.

\bibitem{BurgerMonod1999} M. Burger and N. Monod, {\it Bounded cohomology of lattices in higher rank Lie groups}, J. Eur. Math. Soc. (JEMS) { 1} (1999), no.~2, 199--235.

\end{thebibliography}
\end{document}